
\documentclass[a4paper,preprint,12pt]{elsarticle}




\usepackage{amssymb}


\journal{Information Processing Letter}

\newtheorem{theorem}{Theorem}
\newtheorem{corollary}[theorem]{Corollary}

\newtheorem{lem}[theorem]{Lemma}

\newtheorem{prop}[theorem]{Proposition}
\newproof{proof}{Proof}

\begin{document}

\begin{frontmatter}



\title{Covering graphs with convex sets and partitioning graphs into convex sets\tnoteref{t1}}

\tnotetext[t1]{L.M.~Gonz\'alez, L.N.~Grippo, and M.D.~Safe were partially supported by PIO CONICET UNGS-144-20140100011-CO and UNS Grant PGI 24/L103. M.D. Safe acknowledges partial support of MATH-AmSud 18-MATH-01 and the French-Argentinian International Laboratory SINFIN. V.F.~dos Santos was partially supported by grants from CAPES, CNPq, and FAPEMIG.}



\author[UNGS]{Luc\'ia M. Gonz\'alez}
\ead{lgonzale@ungs.edu.ar}
\author[UNGS]{Luciano N. Grippo}
\ead{lgrippo@ungs.edu.ar}
\author[DM,INMABB]{Mart\'in D. Safe}
\ead{msafe@uns.edu.ar}
\author[UFMG]{Vin\'icius F. dos Santos}
\ead{viniciussantos@dcc.ufmg.br}

\address[UNGS]{Instituto de Ciencias, Universidad Nacional de General Sarmiento, Los Polvorines, Buenos Aires, Argentina}
\address[DM]{Departamento de Matem\'atica, Universidad Nacional del Sur (UNS), Bah\'ia Blanca, Buenos Aires, Argentina}
\address[INMABB]{INMABB, Universidad Nacional del Sur (UNS)-CONICET, Bah\'ia Blanca, Buenos Aires, Argentina}
\address[UFMG]{Departamento de Ci\^{e}ncia da Computa\c{c}\~{a}o, Universidade Federal de Minas Gerais, Belo Horizonte, Minas Gerais, Brazil}

\begin{abstract}
We present some complexity results concerning the problems of covering a graph with $p$ convex sets and of partitioning a graph into $p$ convex sets. The following convexities are considered: digital convexity, monophonic convexity, $P_3$-convexity, and $P_3^*$-convexity. 
\end{abstract}

\begin{keyword}

convex \sep $p$-cover \sep convex $p$-partition \sep digital convexity \sep monophonic convexity \sep $P_3$-convexity \sep $P_3^*$-convexity




\end{keyword}

\end{frontmatter}



\section{Introduction}\label{intro}
A \emph{convexity} of a graph $G$ is a pair $(V(G),\mathcal{C})$ where $\mathcal{C}$ is a family of subsets of $V(G)$ satisfying all the following conditions: $ \emptyset \in \mathcal{C}$, $V(G)\in \mathcal{C}$, and $\mathcal{C}$ is closed under intersections (i.e., $V_1\cap V_2\in \mathcal{C}$ for each $V_1,V_2\in \mathcal{C}$). Each set of the family $\mathcal{C}$ is called \emph{$\mathcal{C}$-convex}. Distinct convexities of a graph have been widely studied in the last years. Several articles can be found in the literature dealing with algorithmic and complexity issues of parameters related to different kind of convexities. Interesting enough is to compare the behavior of these parameters under different convexities from a computational complexity perspective. For an introduction to the different parameters studied in the literature related to a convexity of a graph, see e.g.~\cite{Douchet1988}.

A family $\{V_1,\ldots,V_p\}$ of $p$ $\mathcal{C}$-convex sets, each of which different from $V(G)$, is a \emph{$\mathcal{C}$-convex $p$-cover} if $V_1\cup\cdots\cup V_p=V(G)$ and a \emph{$\mathcal{C}$-convex $p$-partition} if, in addition, $V_i\cap V_j=\emptyset$ for each $i,j\in\{1,\ldots,p\}$ with $i\neq j$. In this work, we consider two associated decision problems: in one case, the input is just a graph $G$ and, in the other case, the input is a graph $G$ and an integer $p\ge 2$. In both cases, the decision problem is: `Does the graph $G$ have a $\mathcal{C}$-convex $p$-cover (resp.\ $\mathcal{C}$-convex $p$-partition)?'.

All graphs in this work are finite, undirected, without no loops and no multiple edges. Let $G$ be a graph. We denote by $V(G)$ and $E(G)$ the vertex set and the edge set of $G$, respectively. The neighborhood of a vertex $v$ of $G$ is denoted by $N_G(v)$, and $N_G[v]$ stands for the set $N_G(v)\cup\{v\}$. If $X$ is a set, $\vert X\vert$ denotes its cardinality. The \emph{degree} of a vertex $v$ of $G$ is $\vert N_G(v)\vert$. If $S$ is a subset of $V(G)$, then the set of those vertices of $G$ with at least one neighbor in $S$ is denoted by $N_G(S)$, and $N_G[S]$ stands for $N_G(S)\cup S$. The \emph{length} of a path is its number of edges. The \emph{distance} between two vertices $u$ and $v$ of $G$, denoted by $d_G(u,v)$, is the minimum length of a path having $u$ and $v$ as end-vertices. The \emph{diameter} of a graph is the maximum distance between two of its vertices. A path is \emph{induced} if there is no edge joining two nonconsecutive vertices of the path. We denote by $P_3$ the induced path on three vertices. We denote the \emph{complement graph} of $G$ by $\overline G$. Graph $G$ is \emph{co-bipartite} if $\overline G$ is bipartite. For any other graph-theoretic notions not given here, see~\cite{West01}.

Let $\mathcal{P}$ be a set of paths in $G$ and let $S\subseteq V(G)$. If $u$ and $v$ are two vertices of $G$, then the \emph{$\mathcal{P}$-interval} $J_{\mathcal{P}}[u,v]$ is the set of all vertices lying in some path $P\in \mathcal{P}$ having $u$ and $v$ as its end-vertices. Let $J_{\mathcal P}[S]=\bigcup_{u,v\in S}{J_{\mathcal P}[u,v]}$. Let $\mathcal{C}$ be the family of all sets $S$ of vertices of $G$ such that for each path $P\in \mathcal{P}$ whose end-vertices belong to $S$, every vertex of $P$ also belongs to $S$; i.e., $ \mathcal{C}$ consists in those subsets $S$ of $V(G)$ such that $J_{\mathcal{P}}[S]=S$. It is easy to show that $(V(G),\mathcal{C})$ is a convexity of $G$ and $\mathcal{C}$ is called the \emph{path convexity generated by $\mathcal P$}. Some of the most studied path convexities are the \emph{geodesic convexity}, the \emph{monophonic convexity}, the \emph{$P_3$-convexity}, and the \emph{$P_3^*$-convexity}, which are the convexities whose convex sets are generated by the set of all minimum paths, the set of all induced paths, the set of all paths of length three, and the set of all induced paths of length three of the graph, respectively.

A set $S$ of vertices of a graph $G$ is \emph{digitally convex} if, for each vertex $v$ of $G$, $N_G[v]\subseteq N_G[S]$ implies $v\in S$.  The \emph{digital convexity} of a graph $G$ is the pair $(V(G),\mathcal{C})$ where $\mathcal{C}$ is the set of all digitally convex sets of $G$. This convexity was introduced in~\cite{PR-digitalconvexity-1966} as a tool to filter digital images.

The problem of deciding whether $G$ has a $\mathcal{C}$-convex $p$-partition was introduced by Artigas et al.~\cite{paper2}. They proved that the problem is NP-complete for each fixed integer $p\geq 2$, under the geodesic convexity. Centeno et al.~\cite{paper1} proved that the problem is also NP-complete under the $P_3$-convexity if a graph $G$ and an integer $p\geq 2$ are given as input. The problem of deciding whether a graph has a $\mathcal{C}$-convex $p$-cover was introduced in~\cite{paper3} also by Artigas et al. This problem is NP-complete under the geodesic convexity for each fixed integer $p\geq 2$~\cite{paper3,paper4}.

This article is organized as follows. In Section~\ref{sec:digital convexity}, we deal with the problems of covering with convex sets and of partitioning into convex sets a graph under the digital convexity. In Section~\ref{sec:P_3-convexity}, we present results in connection with the problem of partitioning a graph into convex sets under the $P_3$-convexity, as well as the $P_3^*$-convexity, and we also consider the problem of covering a graph with $P_3$-convex sets. In Section~\ref{sec:monophonic}, we address these problems under the monophonic convexity.

\section{Digital convexity}\label{sec:digital convexity}

We will call \emph{d-convex} to any convex set under the digital convexity. Our first result characterizes d-convex sets as the complements of the closed neighborhoods of sets of vertices.

\begin{prop}\label{prop:1} 
Let $G$ be a graph, $S \subseteq V(G)$, and $W = V(G)\setminus N_G[S]$. For each vertex $v$ of $G$, $N_G[v]\subseteq N_G[S]$ if and only if $v \notin N_G[W]$.
\end{prop}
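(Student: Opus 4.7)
The plan is to observe that the statement is essentially a set-theoretic identity obtained by unfolding the definition of $W$, together with the standard fact that $v \notin N_G[W]$ means exactly that neither $v$ nor any neighbor of $v$ belongs to $W$. So my approach would be to rewrite $v \notin N_G[W]$ as the condition $N_G[v] \cap W = \emptyset$, and then substitute $W = V(G) \setminus N_G[S]$.

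More concretely, I would argue as follows. By definition, $N_G[W] = W \cup N_G(W)$, hence $v \in N_G[W]$ holds if and only if either $v \in W$ or $v$ has a neighbor in $W$, which is the same as saying $N_G[v] \cap W \neq \emptyset$. Taking negations, $v \notin N_G[W]$ is equivalent to $N_G[v] \cap W = \emptyset$, i.e., $N_G[v] \subseteq V(G) \setminus W$. Now substituting $W = V(G) \setminus N_G[S]$ gives $V(G) \setminus W = N_G[S]$, and the desired equivalence $N_G[v] \subseteq N_G[S] \iff v \notin N_G[W]$ follows immediately.

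Since both directions of the equivalence arise from the same chain of equalities, I would present the argument as a single sequence of iff-steps rather than splitting it into two implications. There is no real obstacle here: the only point that needs a moment's care is the observation that $v \in N_G[W]$ amounts to $N_G[v] \cap W \neq \emptyset$ (this uses $N_G[W] = W \cup N_G(W)$ together with the symmetry of the adjacency relation, so that $v \in N_G(W)$ iff some $w \in W$ satisfies $w \in N_G(v)$). Once this is spelled out, the rest is purely a substitution.
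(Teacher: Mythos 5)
Your proposal is correct and follows essentially the same route as the paper's proof: both rewrite $v \notin N_G[W]$ as $N_G[v] \cap W = \emptyset$ and then use $V(G)\setminus W = N_G[S]$ to conclude, your version merely spelling out the intermediate steps (symmetry of adjacency, the complementation) in more detail.
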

\begin{proof} In fact, $N_G[v] \subseteq N_G[S]$ is equivalent to $N_G[v] \cap W = \emptyset$ which, in turn, is equivalent to $v \notin N_G[W]$.
\end{proof}

\begin{lem}\label{lem:complementofclosedneighborhood}
A set of vertices $S$ of a graph $G$ is d-convex if and only if $S=V(G)\setminus N_G[W]$ for some set $W \subseteq V(G)$.
\end{lem}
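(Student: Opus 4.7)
The plan is to prove both implications using Proposition~\ref{prop:1} as the main tool. For the forward direction, I would pick the canonical choice $W = V(G)\setminus N_G[S]$. Applying Proposition~\ref{prop:1} gives, for every vertex $v$, that $v\notin N_G[W]$ is equivalent to $N_G[v]\subseteq N_G[S]$. Since $S$ is d-convex, the right-hand side is equivalent to $v\in S$ (d-convexity gives one direction, and the other is trivial because $v\in S$ implies $N_G[v]\subseteq N_G[S]$). Thus $S = V(G)\setminus N_G[W]$.

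The converse is where the real work lies. Assume $S = V(G)\setminus N_G[W]$ for some $W\subseteq V(G)$, and take $v$ with $N_G[v]\subseteq N_G[S]$; the goal is $v\in S$. The intended move is to compare $W$ with the canonical set $W' = V(G)\setminus N_G[S]$ that Proposition~\ref{prop:1} is tailored to. The key observation is that $W\subseteq W'$: by the hypothesis $S = V(G)\setminus N_G[W]$, no vertex of $W$ lies in $S$ (since $W\subseteq N_G[W]$), and no vertex of $W$ has a neighbor in $S$ (since such a neighbor would be in $N_G[W]$, contradicting its membership in $S$). Hence $W$ is disjoint from $N_G[S]$, i.e., $W\subseteq W'$, which in turn yields $N_G[W]\subseteq N_G[W']$. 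By Proposition~\ref{prop:1} applied to the pair $(S,W')$, the hypothesis $N_G[v]\subseteq N_G[S]$ gives $v\notin N_G[W']$, and therefore $v\notin N_G[W]$, i.e., $v\in S$.

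I expect the main obstacle to be precisely this containment $W\subseteq V(G)\setminus N_G[S]$, since Proposition~\ref{prop:1} is stated only for the specific choice $W'=V(G)\setminus N_G[S]$ and the lemma allows arbitrary $W$. Once one recognizes that the definition $S = V(G)\setminus N_G[W]$ forces $W$ to be both outside and non-adjacent to $S$, the rest reduces to a direct application of the proposition. No further machinery or case analysis should be required.
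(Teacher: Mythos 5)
Your proof is correct and follows essentially the same route as the paper: the forward direction uses the canonical choice $W=V(G)\setminus N_G[S]$ with Proposition~\ref{prop:1}, and the converse hinges on the same key observation that $W$ must be disjoint from $N_G[S]$. Your detour through $W'=V(G)\setminus N_G[S]$ and the monotonicity $N_G[W]\subseteq N_G[W']$ is only a cosmetic repackaging of the paper's direct deduction that $N_G[v]\subseteq N_G[S]$ forces $N_G[v]\cap W=\emptyset$, hence $v\notin N_G[W]$.
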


\begin{proof} Suppose that $S$ is a d-convex set of $G$ and let $W=V(G)\setminus N_G[S]$. We claim that $S=V(G)\setminus N_G[W]$. On the one hand, if $v\in S$, then $N_G[v]\subseteq N_G[S]$ and, by virtue of Proposition~\ref{prop:1}, $v\notin N_G[W]$. On the other hand, if $v\notin N_G[W]$, then, by Proposition~\ref{prop:1}, $N_G[v]\subseteq N_G[S]$ and, since $S$ is d-convex, $v\in S$. We conclude that $S=V(G)\setminus N_G[W]$, as desired.

Conversely, let $W\subseteq V(G)$ and $S=V(G)\setminus N_G[W]$. Let $v\in V(G)$ such that $N_G[v]\subseteq N_G[S]$. 
Notice that $N_G[S]\cap W=\emptyset$ (in fact, if there were some vertex $w\in W\cap N_G[S]$, then there would be some vertex $s\in N_G[W]\cap S$, a contradiction). Thus, $N_G[v]\cap W=\emptyset$ or, equivalently, $\{v\}\cap N_G[W]=\emptyset$. Hence, $v\in V(G)\setminus N_G[W]=S$. This proves that $S$ is a d-convex set of $G$.\end{proof}


A \emph{total dominating set} of a graph $G$ is a set $S$ of vertices of $G$ such each vertex of $G$ has at least one neighbor in $S$. Our next result shows that the existence of a covering with few digitally convex sets is equivalent to the existence of sufficiently small total dominating sets in the complement.

\begin{theorem}\label{thm:dominating set}
A graph $G$ has a d-convex $p$-cover if and only if $\overline G$ has a total dominating set of cardinality at most $p$.
\end{theorem}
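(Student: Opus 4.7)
The plan is to use Lemma~\ref{lem:complementofclosedneighborhood} to recast every d-convex set as the complement $V(G)\setminus N_G[W]$ of a closed neighborhood, and then to translate the covering condition into a total domination condition in $\overline G$. The key auxiliary observation is the following equivalence: for any $W\subseteq V(G)$ and any $v\in V(G)$,
\[
v\in V(G)\setminus N_G[W]\quad\Longleftrightarrow\quad W\subseteq N_{\overline G}(v),
\]
because $v\notin N_G[W]$ means simultaneously $v\notin W$ and no element of $W$ is $G$-adjacent to $v$, which is exactly the statement that every element of $W$ is $\overline G$-adjacent to $v$. With this in hand, both implications are essentially immediate.

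For the forward direction, I will start with a d-convex $p$-cover $\{V_1,\dots,V_p\}$ and apply Lemma~\ref{lem:complementofclosedneighborhood} to write each $V_i=V(G)\setminus N_G[W_i]$ for some $W_i\subseteq V(G)$; the condition $V_i\neq V(G)$ forces $W_i\neq\emptyset$, so I can pick $t_i\in W_i$ and set $T=\{t_1,\dots,t_p\}$. Given any $v\in V(G)$, the cover property yields some index $i$ with $v\in V_i$, and by the equivalence above $W_i\subseteq N_{\overline G}(v)$; in particular $t_i\in N_{\overline G}(v)$, so $T$ totally dominates $\overline G$ with $|T|\le p$.

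For the converse, given a total dominating set $T=\{t_1,\dots,t_q\}$ of $\overline G$ with $q\le p$, I set $W_i=\{t_i\}$ for $1\le i\le q$, define $V_i=V(G)\setminus N_G[W_i]$, and (if $q<p$) pad the family with repetitions of one of these sets to reach exactly $p$ members. Each $V_i$ is d-convex by Lemma~\ref{lem:complementofclosedneighborhood} and satisfies $V_i\neq V(G)$ since $W_i\neq\emptyset$; for any $v\in V(G)$ total domination produces an index $i\le q$ with $t_i\in N_{\overline G}(v)$, and the equivalence then gives $v\in V_i$, so $\{V_1,\dots,V_p\}$ is a d-convex $p$-cover.

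I do not anticipate a real obstacle: the substance is entirely in Lemma~\ref{lem:complementofclosedneighborhood} and the one-line translation between $N_G[W]$ and $N_{\overline G}(v)$. The only point requiring mild care is the convention that a $p$-cover allows repeated members, so that a total dominating set of size strictly less than $p$ still yields a cover indexed by $\{1,\dots,p\}$; if strict distinctness were required, the padding step would need to insert other nonempty $W_i$'s, but this is inessential.
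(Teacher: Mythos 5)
Your proof is correct and follows essentially the same route as the paper: both rely on Lemma~\ref{lem:complementofclosedneighborhood} to write each d-convex set as $V(G)\setminus N_G[W]$, pick one vertex from each $W_i$ to obtain the total dominating set, and use singleton sets $V(G)\setminus N_G[w_i]=N_{\overline G}(w_i)$ for the converse. Your explicit equivalence $v\in V(G)\setminus N_G[W]\Leftrightarrow W\subseteq N_{\overline G}(v)$ is just a slight repackaging of the paper's identity, and your remark about padding when the dominating set has fewer than $p$ vertices is a point the paper glosses over in the same way.
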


\begin{proof}
Let $X=\{w_1,w_2,\ldots,w_k\}\subseteq V(G)$ be a total dominating set of $\overline G$ with $k\leq p$. By virtue of Lemma~\ref{lem:complementofclosedneighborhood}, $V_i= V(G)\setminus N_G[w_i]$ is a d-convex set for each $i\in\{1,\ldots,k\}$. We claim that $\{V_1,V_2,\ldots,V_k\}$ is a cover of $V(G)$. Since $X$ is a total dominating set of $\overline G$, for each $v$ in $ V(\overline{G})\ ({}=V(G))$, there exists some $i\in\{1,\ldots,k\}$ such that $v\in N_{\overline G}(w_i)=V(G)\setminus N_G[w_i]=V_i$. Therefore, $V(G)=\bigcup_{i=1}^k V_i$.

Conversely, let $\{V_1,\ldots,V_p\}$ be a d-convex $p$-cover of $G$. Hence, by Lemma \ref{lem:complementofclosedneighborhood}, $V_i=V(G)\setminus N_G[W_i]$ for some $W_i\subseteq V(G)$. Choose a vertex $w_i\in W_i$ for each $i\in\{1,\ldots,p\}$. Notice that it may happen that $w_i=w_j$ even if $i\neq j$. Let $W=\{w_1,\ldots,w_p\}$. By construction, $\vert W\vert\leq p$. As $\{V_1,\ldots,V_p\}$ is a cover of $G$, for each $v\in V(\overline{G})$ there is some $i\in\{1,\ldots,p\}$ such that $v\in V_i=V(G)\setminus N_G[W_i]\subseteq V(G)\setminus N_G[w_i]=N_{\overline G}(w_i)$. This proves that $W$ is a total dominating set of $\overline G$ with at most $p$ vertices.\end{proof}

Since the problem of deciding, given a graph $G$ and a positive integer $p$, whether $G$ has a total dominating set of size at most $p$ is NP-complete~\cite{td}, Theorem~\ref{thm:dominating set} implies the following.

\begin{corollary}\label{cor:p-cover_d-convex}
It is NP-complete to decide, given a graph $G$ and an integer $p\geq 2$, whether $G$ has a d-convex $p$-cover.
\end{corollary}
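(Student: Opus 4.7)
The plan is to establish NP-completeness in the standard two steps, noting that Theorem~\ref{thm:dominating set} already does the heavy lifting by translating d-convex $p$-covers of $G$ into total dominating sets of cardinality at most $p$ in $\overline G$. Thus the corollary essentially reduces to combining the known NP-completeness of the total dominating set problem with a membership-in-NP argument.

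First, for NP membership, I would take a candidate family $\{V_1,\ldots,V_p\}$ as a certificate. In polynomial time one checks that each $V_i\neq V(G)$, that $\bigcup_{i=1}^{p} V_i = V(G)$, and that each $V_i$ is d-convex. The d-convexity check is polynomial because, by Lemma~\ref{lem:complementofclosedneighborhood}, $V_i$ is d-convex if and only if $V_i = V(G)\setminus N_G[V(G)\setminus N_G[V_i]]$, which involves only a constant number of neighborhood computations.

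For NP-hardness, I would reduce from the problem `Does $H$ admit a total dominating set of cardinality at most $p$?', known to be NP-complete~\cite{td}. Given an instance $(H,p)$ with $p\geq 2$, I would compute $\overline H$ in polynomial time and output the instance $(\overline H, p)$ of the d-convex $p$-cover problem. Theorem~\ref{thm:dominating set} applied to $G = \overline H$ yields immediately that $\overline H$ has a d-convex $p$-cover if and only if $H = \overline{\overline H}$ has a total dominating set of cardinality at most $p$, which is the desired equivalence.

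Since the bulk of the work is already encapsulated in Theorem~\ref{thm:dominating set}, there is no serious obstacle. The only mildly delicate point is to confirm that the source problem remains NP-complete under the restriction $p\geq 2$, but this is routine: the case $p=1$ amounts to deciding the existence of a universal vertex in $H$, which is polynomial-time decidable and hence can be filtered out without affecting the hardness reduction.
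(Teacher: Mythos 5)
Your proposal is correct and follows essentially the same route as the paper: the paper also obtains this corollary directly from Theorem~\ref{thm:dominating set} by reducing from the NP-complete total dominating set problem (applying the theorem to the complement of the given instance), with membership in NP being routine via Lemma~\ref{lem:complementofclosedneighborhood}. Your added remarks on certificate checking and on filtering out $p=1$ are fine elaborations of details the paper leaves implicit.
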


Notice that Corollary~\ref{cor:p-cover_d-convex} proves the NP-completeness of deciding whether a graph $G$ has d-convex $p$-cover when $p$ is part of the input. However, the complexity for the case in which $p$ is a fixed integer is unknown.

We were not able to determine the computational complexity of the problem of deciding, given a graph $G$ and an integer $p$, whether the graph $G$ has a d-convex $p$-partition. Below, we present a result that shows that if the graph $G$ is assumed bipartite and we fix $p=2$, then the problem becomes polynomial-time solvable.
We first prove the following lemma.

\begin{lem}\label{lem:diameter}
Let $G$ be a connected graph. If $G$ has a d-convex $2$-partition, then $G$ has diameter at least 3.
\end{lem}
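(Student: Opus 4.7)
The plan is to use Lemma~\ref{lem:complementofclosedneighborhood} to write each part of the partition as a closed neighborhood, and then to show that if the diameter were at most $2$ the two parts would be forced to overlap.

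First I would fix a d-convex $2$-partition $\{V_1,V_2\}$ of $G$ and note that both $V_i$ are nonempty (since neither equals $V(G)$ and they partition $V(G)$). Applying Lemma~\ref{lem:complementofclosedneighborhood} to each, I obtain sets $W_1,W_2\subseteq V(G)$ with $V_i=V(G)\setminus N_G[W_i]$. Because $\{V_1,V_2\}$ is a partition of $V(G)$, this immediately yields $V_2=N_G[W_1]$ and $V_1=N_G[W_2]$.

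Next, I would observe that both $W_1$ and $W_2$ are nonempty (otherwise the corresponding $N_G[W_i]$ is empty, forcing one of the $V_j$ to equal $V(G)$). Pick $w_1\in W_1$ and $w_2\in W_2$. By construction, $w_1\in N_G[W_1]=V_2$ and $w_2\in N_G[W_2]=V_1$, so in particular $w_1\neq w_2$ because $V_1\cap V_2=\emptyset$.

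For the diameter bound, I would argue by contradiction, assuming $d_G(w_1,w_2)\le 2$. If $d_G(w_1,w_2)=1$, then $w_2$ is a neighbor of $w_1\in W_1$, so $w_2\in N_G[W_1]=V_2$, contradicting $w_2\in V_1$. If $d_G(w_1,w_2)=2$, there is a common neighbor $v$ of $w_1$ and $w_2$; this $v$ lies in $N_G[W_1]\cap N_G[W_2]=V_2\cap V_1=\emptyset$, again a contradiction. Since $G$ is connected, $d_G(w_1,w_2)$ is finite, so the only remaining possibility is $d_G(w_1,w_2)\ge 3$, giving $\operatorname{diam}(G)\ge 3$.

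There is no real obstacle here once the translation via Lemma~\ref{lem:complementofclosedneighborhood} is in place; the only mildly delicate point is checking that $W_1$ and $W_2$ are nonempty so that representatives $w_1,w_2$ can indeed be chosen.
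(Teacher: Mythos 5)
Your proof is correct, but it takes a genuinely different route from the paper's. The paper argues straight from the definition of digital convexity: by connectedness it picks adjacent vertices $u_1\in V_1$ and $u_2\in V_2$; since $u_i\in N_G[V_{3-i}]$ and $V_{3-i}$ is d-convex, there is a neighbor $v_i$ of $u_i$ with $v_i\notin N_G[V_{3-i}]$, and then $v_1,u_1,u_2,v_2$ is an induced path of length three, from which the diameter bound is drawn. You instead invoke Lemma~\ref{lem:complementofclosedneighborhood} to write $V_i=V(G)\setminus N_G[W_i]$, use the partition property to get the structural fact $V_2=N_G[W_1]$ and $V_1=N_G[W_2]$, and then show that any $w_1\in W_1$, $w_2\in W_2$ are at distance at least $3$, because distance $1$ or $2$ would place a vertex in $V_1\cap V_2=\emptyset$. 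Your route buys a clean intermediate statement (in a d-convex $2$-partition each part is exactly a closed neighborhood) and it establishes $d_G(w_1,w_2)\ge 3$ directly by excluding distances $1$ and $2$; this is arguably tighter than the paper's last step, since the mere existence of an induced path on four vertices does not by itself force diameter at least $3$ (the five-cycle shows this), and the paper's $v_1,v_2$ need the same kind of common-neighbor exclusion that you carry out explicitly. The paper's route, in exchange, works directly from the definition without the characterization lemma and exhibits an explicit induced path witnessing the bound. Your care in checking that $W_1,W_2\neq\emptyset$ and that $w_1\neq w_2$ covers exactly the delicate points, so the argument is complete.
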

\begin{proof}
Let $\{V_1,V_2\}$ be a d-convex $2$-partition of a connected graph $G$. Since $G$ is connected, there exists $u_i\in V_i$ for each $i\in\{1,2\}$ such that $u_1$ is adjacent to $u_2$. Since $u_i\in N_G[V_{i+1}]$ (where sums should be considered modulo 2) and $V_{i+1}$ is a $d$-convex set, there exists some vertex $v_i$ adjacent to $u_i$ such that $v_i\notin N_G[V_{i+1}]$. By construction, $v_1,u_1,u_2,v_2$ is an induced path of length three of $G$. Consequently, the diameter of $G$ is at least $3$.
\end{proof}

We are now ready to prove the following result.

\begin{theorem}\label{thm:2-partition_d-convex}
A bipartite graph $G$ has a d-convex $2$-partition if and only if $G$ has diameter at least 3. Moreover, if such a partition exists, it can be found in polynomial time.
\end{theorem}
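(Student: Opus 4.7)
The plan is to prove the equivalence and to provide an explicit polynomial-time construction. The forward direction is immediate: Lemma~\ref{lem:diameter} handles the case where $G$ is connected, and if $G$ is disconnected its diameter is (by the usual convention) infinite and therefore at least~$3$.

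For the backward direction, I first dispose of the disconnected case: picking any connected component $C$ and letting $V_1 = C$, $V_2 = V(G) \setminus C$, we have $N_G[V_2] = V_2$ (no edges leave the components), so Lemma~\ref{lem:complementofclosedneighborhood} shows both $V_1$ and $V_2$ are d-convex. Assume therefore that $G$ is connected with diameter at least~$3$. I pick a vertex $u$ whose eccentricity is at least~$3$ and run BFS from $u$, producing layers $L_i = \{w : d_G(u,w) = i\}$. Let $B \subseteq L_2$ consist of those vertices of $L_2$ whose neighbors all lie in $L_1$, and set $V_1 = L_0 \cup L_1 \cup B$ and $V_2 = V(G) \setminus V_1$. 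Both parts are nonempty proper subsets of $V(G)$, since $u \in V_1$ while every vertex of the nonempty layer $L_3$ belongs to $V_2$.

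The core verification is that $V_1$ and $V_2$ are d-convex, which I will establish via Lemma~\ref{lem:complementofclosedneighborhood} by exhibiting the required sets: take $W_1 = \bigcup_{i \geq 3} L_i$ and $W_2 = \{u\} \cup B$ and check that $N_G[W_1] = V_2$ and $N_G[W_2] = V_1$. Bipartiteness enters precisely here: the BFS layers are independent sets, so every edge of $G$ joins consecutive layers and, in particular, the neighbors of any $L_2$ vertex lie in $L_1 \cup L_3$. Hence an $L_2$ vertex has a neighbor in $L_3$ if and only if it does not belong to $B$, giving $N_G[W_1] = \bigl(\bigcup_{i\ge 3} L_i\bigr) \cup (L_2 \setminus B) = V_2$; and each $b \in B$ satisfies $N_G(b) \subseteq L_1$ by definition, giving $N_G[W_2] = (L_0 \cup L_1) \cup (B \cup N_G(B)) = L_0 \cup L_1 \cup B = V_1$.

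The whole construction amounts to a BFS, an eccentricity comparison, and a linear scan to identify $B$, so it runs in time polynomial in $|V(G)| + |E(G)|$. The main subtlety to anticipate is that the naive candidate $\{N_G[u], V(G) \setminus N_G[u]\}$ fails whenever $L_2$ contains a vertex with no neighbor in $L_3$ (for instance in a ``double star'' where two adjacent central vertices each have several pendant leaves); the fix is to move exactly those obstructing vertices into $V_1$ and, correspondingly, include them in $W_2$ together with $u$ so that $N_G[W_2]$ still recovers~$V_1$.
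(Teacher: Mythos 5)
Your proof is correct and follows essentially the same route as the paper: your set $L_0\cup L_1\cup B$ is exactly the paper's $V_1=N_G[u]\cup\{x\in X\colon N_G(x)\subseteq N_G[u]\}$ (with $u\in X$), described via BFS layers, and the forward direction is the same appeal to Lemma~\ref{lem:diameter}. The only difference is that you spell out, via Lemma~\ref{lem:complementofclosedneighborhood} and the witnesses $W_1,W_2$, the d-convexity verification that the paper leaves as ``not hard to see.''
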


\begin{proof} If $G$ is disconnected and $\mathcal{C}$ is any connected component of $G$, then $\{\mathcal{C},V(G)-\mathcal{C}\}$ is clearly a d-convex $2$-partition of $G$ that, in addition, can be built in polynomial time. Hence we assume, without loss of generality, that $G$ is connected. We have already proved in Lemma~\ref{lem:diameter} that if $G$ is a connected graph having a d-convex $2$-partition, then $G$ has diameter at least $3$. Conversely, assume now that $G$ is a connected bipartite graph with bipartition $\{X,Y\}$ and diameter at least 3. Two vertices $u,v\in V(G)$ such that $d_G(u,v)=3$ and the two sets $V_1=N_G[u]\cup\{x\in X\colon\,N_G(x)\subseteq N_G[u]\}$ and $V_2=V(G)\setminus V_1$ can be computed polynomial time. It is not hard to see that $\{V_1,V_2\}$ is a d-convex $2$-partition of $G$.
\end{proof}

\section{Convexities generated by paths of length three}\label{sec:P_3-convexity}
We1 call \emph{$P_3$-convex} (resp.\ \emph{$P_3^*$-convex}) to any convex set under the $P_3$-convexity (resp.\ $P_3^*$-convexity). Let $G$ be a bipartite graph with a bipartition $\{X,Y\}$ and let $p$ be an integer such that $p\geq 2$. We construct a bipartite graph $G'$ as follows: we take a copy of $G$ and a complete bipartite graph $K_{r,r}$ where $r=\max\{p+2,\vert X\vert,\vert Y\vert\}$, we add an edge connecting each vertex in $X$ with a vertex of one of the partite sets of $K_{r,r}$, and we add an edge connecting each vertex in $Y$ with a vertex of the other partite set of $K_{r,r}$, so that there are no two vertices in the copy of $G$ in $G'$ adjacent to the same vertex in the complete bipartite graph~$K_{r,r}$.

A \emph{cut} of a graph $G$ is a partition of $V(G)$ into two sets $X$ and $Y$, denoted by $(X,Y)$. The set of all edges having one endpoint in $X$ and the other one in $Y$ is called the \emph{edge cut} of the cut $(X,Y)$. A \emph{matching cut set} is an edge cut that is a (possible empty) matching (i.e., no two edges of the edge cut share an endpoint). 

It can be easily proved that the problem of deciding whether a graph has a $P_3$-convex $2$-partition and the problem of deciding whether a graph has matching cut set are equivalent.

\begin{lem} \label{lem:2partitionP3}
A graph $G$ has a $P_3$-convex $2$-partition if and only if $G$ has a matching cut set.
\end{lem}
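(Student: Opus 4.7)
The plan is to reformulate $P_3$-convexity as a purely local condition on the edges between the two parts of the partition. Observe that a set $S \subseteq V(G)$ is $P_3$-convex if and only if no vertex outside $S$ has two or more neighbors in $S$, because a $P_3$ of the form $u\text{-}w\text{-}v$ with $u,v \in S$ and $w \notin S$ is exactly a vertex $w$ outside $S$ with (at least) two neighbors in $S$. Once this reformulation is in place, the equivalence with a matching cut is essentially a matter of reading the definitions.

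For the forward direction, I would assume that $\{V_1,V_2\}$ is a $P_3$-convex $2$-partition of $G$, and argue that the edge cut $(V_1,V_2)$ is a matching. Suppose toward a contradiction that some vertex $w \in V_2$ has two distinct neighbors $u_1,u_2 \in V_1$. Then $u_1\text{-}w\text{-}u_2$ is a $P_3$ whose endpoints lie in $V_1$, so the $P_3$-convexity of $V_1$ forces $w \in V_1$, contradicting $w \in V_2$. Thus every vertex of $V_2$ has at most one neighbor in $V_1$, and by the symmetric argument every vertex of $V_1$ has at most one neighbor in $V_2$, so the set of edges between $V_1$ and $V_2$ is a matching.

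For the converse, I would start with a matching cut $(V_1,V_2)$ of $G$ and verify that each $V_i$ is $P_3$-convex. Take $u,v \in V_1$ and a $P_3$ with endpoints $u,v$ and middle vertex $w$. If $w \in V_2$, then $w$ would have two neighbors ($u$ and $v$) in $V_1$, contradicting the assumption that the cut is a matching. Hence $w \in V_1$, so $V_1$ is $P_3$-convex; the same reasoning applies to $V_2$. Since $\{V_1,V_2\}$ is a partition of $V(G)$ into two nonempty parts (the standard convention for a cut), neither part equals $V(G)$, so this is a genuine $P_3$-convex $2$-partition.

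There is no real obstacle here; the lemma is essentially a translation between definitions. The only small point worth flagging is that $P_3$-convexity can be characterized vertex-locally (a vertex outside $S$ has at most one neighbor inside $S$), which is precisely the local condition defining a matching cut. Once that is noted, both implications follow immediately by contradiction.
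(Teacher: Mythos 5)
Your proof is correct and is precisely the argument the paper has in mind (the paper omits the proof, calling the equivalence easy): the vertex-local characterization of $P_3$-convexity (no vertex outside $S$ has two neighbors in $S$) translates directly into the matching condition on the edge cut, and you correctly handle the side conditions that both parts are proper and nonempty.
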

%

\begin{lem} \label{lem:p-convex_then_p+1convex}
Let $p$ be an integer such that $p\geq 2$. A graph $G$ has a $P_3$-convex $p$-partition if and only if $G'$ has a $P_3$-convex $(p+1)$-partition.
\end{lem}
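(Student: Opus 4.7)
The plan is to prove both directions by exploiting how the gadget $K_{r,r}$ interacts with $P_3$-convexity. Let $A$ and $B$ denote the two partite sets of $K_{r,r}$, with each $x\in X$ joined to a unique $a_x\in A$ and each $y\in Y$ joined to a unique $b_y\in B$. For the forward direction, given a $P_3$-convex $p$-partition $\{V_1,\ldots,V_p\}$ of $G$, I will simply adjoin $V(K_{r,r})$ as an extra $(p+1)$-st part. Each $V_i$ remains $P_3$-convex in $G'$ because, by construction, every vertex of $K_{r,r}$ has at most one neighbor in $V(G)$, so no $K_{r,r}$-vertex can be a common neighbor of two distinct vertices of $V_i$; hence common neighbors of pairs in $V_i$ are the same in $G'$ as in $G$. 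And $V(K_{r,r})$ is $P_3$-convex because the common neighbors of two vertices in the same partite class of $K_{r,r}$ are exactly the opposite partite class (contained in $V(K_{r,r})$), while two vertices in opposite partite classes share no common neighbor at all (their unique $V(G)$-neighbors lie in the disjoint sets $X$ and $Y$).

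For the backward direction, given a $P_3$-convex $(p+1)$-partition $\{W_1,\ldots,W_{p+1}\}$ of $G'$, the first step is to locate the part containing $K_{r,r}$. Since $|A|=r\geq p+2 > p+1$, pigeonhole yields some $W_j$ with at least two vertices of $A$; those two vertices share all of $B$ as common neighbors in $G'$, so $B\subseteq W_j$, and then any two vertices of $B$ force $A\subseteq W_j$. Set $T=W_j\cap V(G)$.

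The key step, which I expect to be the main obstacle, is showing that $T$ is a union of connected components of $G$. The argument: if $x\in T\cap X$ and $y\in N_G(x)$, then $y$ is adjacent in $G'$ both to $x$ and to its unique partner $b_y\in B$; since $x,b_y\in W_j$ and $W_j$ is $P_3$-convex in $G'$, this forces $y\in W_j\cap V(G)=T$. The symmetric argument for $T\cap Y$ then shows $T$ is closed under the neighbor relation of $G$, hence $T$ is a union of connected components of $G$.

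Given this, the rest is routine. Each $W_i$ with $i\neq j$ is a non-empty subset of $V(G)$ (because $V(K_{r,r})\subseteq W_j$) and is $P_3$-convex in $G$, since common neighbors in $G$ are common neighbors in $G'$. If $T=\emptyset$, then $\{W_i:i\neq j\}$ is the desired $P_3$-convex $p$-partition of $G$. Otherwise, I pick any $i_0\neq j$ and replace $W_{i_0}$ by $T\cup W_{i_0}$: the only nontrivial case for $P_3$-convexity of this union arises when $u\in T$ and $v\in W_{i_0}$ share a common neighbor in $G$, but since $T$ is a union of components and $W_{i_0}\subseteq V(G)\setminus T$ lies in the remaining components, such $u$ and $v$ lie in different components and thus share no common neighbor. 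Hence the case is vacuous, the merge is safe, and $\{T\cup W_{i_0}\}\cup\{W_i:i\neq j,i\neq i_0\}$ is a $P_3$-convex $p$-partition of $G$.
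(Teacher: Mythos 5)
Your proof is correct and follows essentially the same route as the paper: the forward direction adjoins $V(K_{r,r})$ as the extra class, and the backward direction uses the pigeonhole on a partite set of $K_{r,r}$ to trap the gadget in one class $W_j$, then applies $P_3$-convexity through the attachment vertices to control $W_j\cap V(G)$ before merging it into another class. Your packaging of that last step as ``$T=W_j\cap V(G)$ is a union of connected components of $G$, then merge'' is just a slightly cleaner reorganization of the paper's case analysis (empty, or no outside neighbours, or contradiction via the path $a,b,c$), which uses exactly the same $P_3$ through a vertex's $K_{r,r}$-partner.
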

\begin{proof}
If $G$ has a $P_3$-convex $p$-partition $\{V_1,\ldots,V_p\}$, then $G'$ has a $P_3$-convex $(p+1)$-partition $\{V_1,\ldots,V_p,V_{p+1}\}$, where $V_{p+1}$ is the set of vertices of the copy of $K_{r,r}$ in $G'$.

Conversely, suppose that $G'$ has a $P_3$-convex $(p+1)$-partition $\{V_1,\ldots,V_p,\linebreak V_{p+1}\}$. Let $V$ be the vertex set of the copy of $G$ in $G'$ and let $V'$ be the vertex set of the copy of $K_{r,r}$ in $G'$. We claim that $V'\subseteq V_i$ for some $i\in\{1,\ldots,p,p+1\}$. Indeed, since $r\ge p+2$, there exist at least two nonadjacent vertices $x,y\in V'\cap V_i$ for some $i\in\{1,\ldots,p,p+1\}$ and thus $V'\subseteq V_i$ (because $V_i$ is $P_3$-convex). We assume, without losing generality, that $i=p+1$; i.e., $V'\subseteq V_{p+1}$. Let $W=V_{p+1}\cap V$. Arguing towards a contradiction, suppose that $W\neq\emptyset$. Notice that if no vertex in $W$ has a neighbor in $V_i$ for some $i\in\{1,\ldots,p\}$, then $\{V_1,\ldots,V_{p-1},V_p\cup W\}$ is a $P_3$-convex $p$-partition of the copy $G$ in $G$' and, in particular, a $P_3$-convex $p$-partition of $G$, as desired. Hence, we assume, without loss of generality, that there is some vertex $a\in W$ adjacent to some vertex $b\in V_i$ for some $i\in\{1,\ldots,p\}$. By the construction of $G'$, there exists a vertex $c\in V'\subseteq V_{p+1}$ which is adjacent to $b$ and nonadjacent to $a$. Consequently, $a,b,c$ is a path on three vertices of $G'$ such that $a,c\in V_{p+1}$ but $b\notin V_{p+1}$, contradicting the fact that $V_{p+1}$ is a $P_3$-convex set of $G'$. This contradiction arose from assuming that $W\neq\emptyset$. Hence, $W=\emptyset$ and thus $V_{p+1}=V'$. Therefore, $\{V_1,\ldots,V_p\}$ is a $P_3$-convex $p$-partition of the copy of $G$ in $G'$ and, in particular, a $P_3$-convex $p$-partition of $G$.
\end{proof}

Since the matching cut set problem is NP-complete even when the input is a bipartite graph~\cite{LR-2003}, by virtue of Lemma~\ref{lem:p-convex_then_p+1convex}, we obtain the following theorem.
\begin{theorem}\label{cor:p-partition_p3-convex_in_bipartite} For each fixed integer $p\geq 2$, it is NP-complete to decide, given a graph $G$, whether $G$ has a $P_3$-convex $p$-partition, even if $G$ is a bipartite graph.
\end{theorem}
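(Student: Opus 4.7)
The plan is to combine Lemmas~\ref{lem:2partitionP3} and~\ref{lem:p-convex_then_p+1convex} with the known NP-hardness of the matching cut problem on bipartite graphs, arguing by induction on $p$. Membership in NP is immediate: a candidate partition $\{V_1,\ldots,V_p\}$ is a polynomial-size certificate, and verifying that each class is nonempty and proper, that the classes cover $V(G)$ disjointly, and that each class is $P_3$-convex (by checking that no induced path on three vertices has both endpoints in the same class and its middle vertex in another class) can be done in polynomial time.

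For the NP-hardness, the base case $p=2$ follows directly from Lemma~\ref{lem:2partitionP3}: a bipartite graph admits a $P_3$-convex $2$-partition if and only if it has a matching cut set, and the latter problem is NP-complete on bipartite graphs~\cite{LR-2003}.

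For the inductive step, suppose NP-hardness has been established for some fixed $p\geq 2$ on bipartite graphs. Given a bipartite instance $G$ with bipartition $\{X,Y\}$, I would build $G'$ via the construction preceding Lemma~\ref{lem:p-convex_then_p+1convex} using this value of $p$, and invoke that lemma to transfer the reduction from $p$ to $p+1$. Two things must be checked. First, $G'$ must itself be bipartite so that the reduction remains within the bipartite class: writing $\{A,B\}$ for the bipartition of $K_{r,r}$ where vertices of $X$ are joined to distinct vertices of $A$ and vertices of $Y$ to distinct vertices of $B$, the pair $\{X\cup B,\,Y\cup A\}$ is a bipartition of $G'$. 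Second, for fixed $p$ the parameter $r=\max\{p+2,|X|,|Y|\}$ is polynomial in $|V(G)|$, so $G'$ has polynomial size and can be built in polynomial time.

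I do not anticipate a genuine obstacle here: the substantive content (the equivalence between $p$-partitions in $G$ and $(p{+}1)$-partitions in $G'$) is already Lemma~\ref{lem:p-convex_then_p+1convex}, and the matching cut hardness does the work at the base case. The only care needed is the bookkeeping of bipartiteness and of polynomial size through the inductive step, both of which are routine given the explicit construction.
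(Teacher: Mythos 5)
Your proposal is correct and follows essentially the same route as the paper: the base case is the equivalence with matching cut (Lemma~\ref{lem:2partitionP3}) together with its NP-completeness on bipartite graphs, and the inductive step is Lemma~\ref{lem:p-convex_then_p+1convex}, noting that the construction of $G'$ preserves bipartiteness and has polynomial size. The paper states this argument very tersely; your write-up merely makes the induction, the NP membership, and the bipartiteness/size bookkeeping explicit.
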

Notice that, in a bipartite graph, the family of $P_3$-convex sets in $G'$ coincides with the family of $P_3^*$-convex sets in $G'$. Hence, Corollary~\ref{cor:p-partition_p3-convex_in_bipartite} still holds if `$P_3$-convex' is replaced by `$P_3^*$-convex'.

A \emph{stable set} is a set of pairwise nonadjacent vertices. A \emph{clique} is a set of pairwise adjacent vertices. A \emph{split graph} is a graph whose vertex set can be partitioned into an independents set $S$ and a clique $K$; the pair $(K,S)$ is called a \emph{split partition}.
\begin{lem}\label{lem:split_graph-p3-convexity}
Let $G$ be a split graph with split partition $(K,S)$ and $p$ be an integer such that $p\ge 2$. If every vertex $s\in S$ has degree at least two, then $G$ has a $P_3$-convex $p$-partition if and only if $G$ has a $P_3$-convex $p$-cover.
\end{lem}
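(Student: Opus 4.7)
The forward direction is trivial, since every $P_3$-convex $p$-partition is, in particular, a $P_3$-convex $p$-cover. For the converse, I plan to start from a $P_3$-convex $p$-cover and construct a $P_3$-convex $p$-partition via a structural characterization. The key claim is: under the degree hypothesis, every $P_3$-convex set $V\subsetneq V(G)$ satisfies either $V\subseteq S$ or $V=\{k\}$ for some $k\in K$. I would prove this by splitting on $|V\cap K|$. If $|V\cap K|\geq 2$, then since $K$ is a clique, every $k'\in K$ is a common neighbor of two vertices of $V\cap K$ and therefore lies in $V$, so $K\subseteq V$; moreover, every $s\in S$ has, by the degree hypothesis, at least two neighbors in $K\subseteq V$, and hence $s\in V$, yielding $V=V(G)$, a contradiction. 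If $|V\cap K|=1$, say $V\cap K=\{k\}$, then any purported $s\in V\cap S$ would, again by the degree hypothesis, admit some $w\in N_G(s)\setminus\{k\}\subseteq K$, which would be a common neighbor of $k$ and $s$ and therefore belong to $V\cap K=\{k\}$, contradicting $w\neq k$; hence $V=\{k\}$.

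Applying this claim to a $P_3$-convex $p$-cover $\{V_1,\ldots,V_p\}$, each $k\in K$ can only lie in cover sets of the form $\{k\}$, so I can fix, for every $k\in K$, an index $\pi(k)\in\{1,\ldots,p\}$ with $V_{\pi(k)}=\{k\}$; distinct $k$'s yield distinct $\pi(k)$'s. Let $J'=\{1,\ldots,p\}\setminus\pi(K)$, of size $p-|K|$, and list the cover sets contained in $S$ as $V_{j_1},\ldots,V_{j_q}$ (with $q\leq|J'|$). These must cover $S$, since no singleton of $K$ contains a vertex of $S$. I would then disjoint them by $U_l=V_{j_l}\setminus(U_1\cup\cdots\cup U_{l-1})$. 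Because the $P_3$-convex subsets of $S$ are precisely those $T\subseteq S$ whose members have pairwise disjoint neighborhoods in $K$ --- a property inherited by subsets --- each $U_l$ is $P_3$-convex; the $U_l$'s are pairwise disjoint with union $S$.

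The partition is then defined by $V'_{\pi(k)}=\{k\}$ for each $k\in K$, $V'_i=U_l$ for $q$ distinct indices in $J'$ (in any bijection), and $V'_i=\emptyset$ for the remaining $|J'|-q$ indices. Pairwise disjointness and that the union equals $K\cup S=V(G)$ follow immediately, giving the required $p$-partition. The main obstacle is the structural claim, and this is where the degree hypothesis is crucial: without it, sets such as $\{k,s\}$ with $s$ of degree $1$ and $N_G(s)=\{k\}$ would be $P_3$-convex, breaking the rigid classification and sinking the rest of the argument.
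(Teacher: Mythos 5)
Your proof is correct. It follows the same overall strategy as the paper's proof (derive a structural restriction on proper $P_3$-convex sets from the degree hypothesis, then make the cover disjoint), but your intermediate lemma is sharper and the reassembly is organized differently. The paper only shows that each cover member $V_i$ is a stable set (it has at most one vertex of $K$ and no edge between $V_i\cap K$ and $V_i\cap S$), and then takes the successive differences $V'_{i+1}=V_{i+1}\setminus\bigcup_{j\le i}V_j$, using stability of $V_i$ to argue that each difference is still $P_3$-convex. You instead classify \emph{all} proper $P_3$-convex sets as either a singleton of $K$ or a subset of $S$ with pairwise disjoint neighborhoods in $K$ (a strictly stronger statement: no proper convex set can mix $K$ and $S$ at all), and then rebuild the partition with explicit index bookkeeping, treating the $K$-singletons and the $S$-parts separately. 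Both arguments ultimately rest on the same mechanism --- under the hypothesis, no vertex has two neighbors inside any cover member, so arbitrary refinements of the cover members remain convex --- so yours buys a cleaner picture of the convex sets at the cost of heavier notation, while the paper's uniform greedy difference is shorter; note also that both constructions may produce empty classes, which is consistent with the paper's conventions since its own proof does the same.
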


\begin{proof}
The `only if' part is clear. Assume that $G$ has a $P_3$-convex $p$-cover $\{V_1,V_2,\ldots,V_p\}$. Notice that, for each $i\in\{1,\ldots,p\}$, $\vert V_i\cap \mathcal{C}\vert \leq 1$ because otherwise every vertex $x\in \mathcal{C}$ would belong to $V_i$ and, since every vertex of $S$ is adjacent to at least two vertices of $\mathcal{C}$, $y\in V_i$ for every vertex $y\in S$, contradicting $V_i\neq V(G)$. Besides, if $x\in \mathcal{C}$ and $y\in S$ are adjacent, then there is no $i\in\{1,\ldots,p\}$ such that $x,y\in V_i$; otherwise, any other neighbor $z$ of $y$ in $\mathcal{C}$ would belong to $V_i$ and so $x,z\in V_i\cap \mathcal{C}$. We have proved that $V_i$ is an independent set for each $i\in\{1,\ldots,p\}$. Let $V'_1=V_1$ and let $V'_{i+1}=V_{i+1}\setminus(\bigcup_{j=1}^i V_j)$ for each $j\in\{2,\ldots,p\}$. By construction, $\{V'_1,\ldots,V'_p\}$ is a partition. We claim that, for each $i\in\{1.\ldots.p\}$, $V'_i$ is a $P_3$-convex set. Arguing towards a contradiction, suppose that there exists some $i\in\{1,\ldots,p\}$ and some vertex $x\in V(G)\setminus V'_i$ such that $\{u,v\}\subseteq N_G(x)\cap V'_i$. In particular, $u,v\in V_i$ and, since $V_i$ is $P_3$-convex, $x\in V_i$, contradicting the fact that $V_i$ is an independent set. This contradiction proves the claim. Hence, $\{V_1',V_2',\ldots,V_p'\}$ is a $P_3$-convex $p$-partition of $G$.\end{proof}


In~\cite{paper1}, it is proved that it is NP-complete to decide whether a split graph $G$ with split partition $(K,S)$ has a $P_3$-convex $p$-partition when $p$ is part of the input. Looking carefully at the proof given in~\cite{paper1}, one can readily verify that it is still valid if every vertex in $S$ has degree at least two.

\begin{theorem}[{\cite[Theorem 2.1]{paper1}}]\label{thm:p3-convex-partition}
The problem of deciding, given a split graph $G$ with a split partition $(K,S)$, where $G$ has a $P_3$-convex $p$-partition is NP-complete, even if each vertex in $S$ has at least two neighbors.\end{theorem}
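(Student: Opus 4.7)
My plan is to revisit the reduction that establishes \cite[Theorem 2.1]{paper1} and inspect the degrees of the vertices in the stable part $S$ of the split graph it constructs, confirming that the extra condition ``every vertex in $S$ has at least two neighbors'' either holds automatically or can be enforced by a trivial modification.

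First, I would recall the source problem used in \cite{paper1} and reconstruct explicitly the split graph $G$ with split partition $(K,S)$ produced by the reduction. Typically in reductions of this flavor, the vertices of $S$ play the role of ``choice'' or ``clause'' objects whose neighborhoods in $K$ encode the combinatorial data of the source instance (literals, hyperedges, sets, etc.). In most natural such constructions, each such neighborhood is designed to contain at least two vertices, so the degree condition is in fact already satisfied.

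Next, I would go through the construction case by case, verifying that no vertex of $S$ ends up as a pendant. Should such a pendant $s$ appear anywhere in the gadgetry, I would attach $s$ to one additional vertex of $K$, chosen to be a neighbor of some $S$-vertex encoding the same structural role as $s$. Since $K$ is a clique, this leaves the split partition intact; and since $p$ is part of the input, minor increases in $|E(G)|$ do not affect the polynomial-time bound of the reduction.

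The main obstacle, and the reason the authors write ``looking carefully,'' is that any modification must not create spurious $P_3$-convex $p$-partitions that do not correspond to valid solutions of the source problem. Concretely, if a new edge $sk$ is added, then in any $P_3$-convex $p$-partition the new neighbor $k$ should end up in the same class as some original neighbor of $s$, so that $s$ is not forced across classes in a way the source instance cannot control. Verifying this case by case, together with re-establishing both directions of the reduction's correctness, is the technical core of the proof.
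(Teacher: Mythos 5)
Your plan follows the same route the paper itself takes: the paper does not reprove the result, it simply cites \cite[Theorem~2.1]{paper1} and remarks that, inspecting the reduction given there, the constructed split graph already satisfies (or the argument remains valid under) the restriction that every vertex of $S$ has at least two neighbors in $K$. The problem is that your text stops at the level of a plan. Everything that constitutes the actual content of the statement is deferred: you never reconstruct the reduction from \cite{paper1}, never identify the source problem, and never verify the degree condition for the concrete gadgets. The assertion that the condition ``either holds automatically or can be enforced by a trivial modification'' is precisely what has to be established; as written, nothing is checked, so nothing is proved.

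Moreover, the fallback branch you sketch is not trivial and is left entirely open. Adding an edge $sk$ with $s\in S$ and $k\in K$ changes the $P_3$-convexity structure: a vertex with two neighbors inside a part is forced into that part, so a new edge can both destroy partitions that existed before (now $s$ may be dragged into a class containing $k$ and another neighbor) and, symmetrically, the interval structure used in the completeness direction may no longer encode the source instance faithfully. You acknowledge that both directions of the reduction would have to be re-proved after the modification, but you do not carry this out, and there is no a priori reason the ``same structural role'' heuristic for choosing $k$ preserves correctness. So either you verify, gadget by gadget, that the original construction of \cite{paper1} already gives every $S$-vertex degree at least two (which is what the paper claims one can ``readily verify''), or you present and fully analyze a modified construction; the proposal does neither, and that is the gap.
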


Therefore, in virtue of Lemma~\ref{lem:split_graph-p3-convexity}, the result below follows.

\begin{corollary}\label{cor:P3-convex p-partition}
It is NP-complete to decide, given a graph $G$ and an integer $p\ge 2$, whether $G$ has a $P_3$-convex $p$-cover, even if $G$ is a split graph.
\end{corollary}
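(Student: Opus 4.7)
The plan is to obtain Corollary~\ref{cor:P3-convex p-partition} as an essentially immediate consequence of chaining Lemma~\ref{lem:split_graph-p3-convexity} with the strengthened NP-completeness result in Theorem~\ref{thm:p3-convex-partition}. First, I would argue that the $P_3$-convex $p$-cover problem lies in NP: given a candidate family $\{V_1,\ldots,V_p\}$, one verifies in polynomial time that each $V_i$ is a proper subset of $V(G)$, that their union is $V(G)$, and that each $V_i$ is $P_3$-convex; the last test reduces to checking, for every triple $(u,x,v)$ with $u,v\in V_i$ and $x$ a common neighbor of $u$ and $v$, that $x\in V_i$ as well.

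For the hardness, I would use the identity reduction. Take an instance $(G,p)$ of the $P_3$-convex $p$-partition problem in which $G$ is a split graph with split partition $(K,S)$ such that every vertex of $S$ has degree at least two. Theorem~\ref{thm:p3-convex-partition} tells us that this restricted version is already NP-complete. Lemma~\ref{lem:split_graph-p3-convexity} then says that $(G,p)$ is a yes-instance of the partition problem if and only if it is a yes-instance of the cover problem. Thus the identity map is a polynomial-time many-one reduction from the former to the latter, and since the output graph is still the same split graph $G$, the NP-hardness is preserved under the restriction to split graphs, as required.

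There is no substantial obstacle in this argument: once Lemma~\ref{lem:split_graph-p3-convexity} and the strengthened form of Theorem~\ref{thm:p3-convex-partition} are in hand, the corollary is little more than a formal composition of the two results together with a routine NP-membership verification. The only place where one has to be slightly careful is in noting that the strengthened degree hypothesis on $S$ used in Theorem~\ref{thm:p3-convex-partition} is exactly what Lemma~\ref{lem:split_graph-p3-convexity} requires, so that the two results interlock without any further adjustment to the reduction.
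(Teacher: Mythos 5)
Your proposal is correct and follows essentially the same route as the paper: the corollary is obtained by combining Lemma~\ref{lem:split_graph-p3-convexity} with the strengthened form of Theorem~\ref{thm:p3-convex-partition}, the identity map serving as the reduction, together with the routine observation that the cover problem is in NP.
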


\section{Monophonic convexity}\label{sec:monophonic}

We will call m-convex to any convex set under the monophonic convexity. We will prove that it is NP-complete to decide, given a graph $G$ and an integer $p\ge 3$, whether $G$ has an m-convex $p$-partition, by adapting the proof of~\cite[Theorem 1]{paper3} for the analogous result for the geodesic convexity. 
If $p$ is a positive integer, \textsc{Clique $p$-Partition} is the problem of deciding, given a graph $G$, whether the vertex set of $G$ can be partitioned into $p$ cliques of $G$. It is well known that this problem is NP-complete for all $p\geq 3$.


\begin{theorem}\label{thm:p-cover_m-convex}
It is NP-complete to decide, given a graph $G$ and an integer $p\ge 3$, whether $G$ has an m-convex $p$-cover.
\end{theorem}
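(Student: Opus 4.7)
The plan is to prove NP-hardness by polynomial-time reduction from \textsc{Clique $p$-Partition}, which is NP-complete for every fixed $p\geq 3$. Membership in NP is clear: one can verify in polynomial time that each candidate set $S_i$ is m-convex (for instance, by checking that, for each connected component $C$ of the subgraph induced by $V(G)\setminus S_i$, the set $N_G(C)\cap S_i$ induces a clique of $G$) and that the sets cover $V(G)$.

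Given an instance $G$ with $|V(G)|\geq p$ (the remaining cases are decided trivially), I would construct $G^*$ by adjoining to $G$ a set $\{u_1,\ldots,u_p\}$ of $p$ new pairwise non-adjacent vertices, each joined to every vertex of $V(G)$. The first main step is a structural lemma asserting that if $S\subsetneq V(G^*)$ is m-convex, then $S\cap V(G)$ is a clique of $G$. Its proof exhibits a short induced path that forces a contradiction whenever two non-adjacent $x,y\in S\cap V(G)$ exist: if some $u_j\notin S$, then $x,u_j,y$ is an induced $P_3$ with internal vertex outside $S$; and when all $u_j$ happen to lie in $S$, the properness of $S$ produces some $z\in V(G)\setminus S$, and then $u_1,z,u_2$ is an induced $P_3$ with internal vertex outside $S$, contradicting m-convexity.

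Using this lemma, the equivalence ``$G$ has a clique $p$-partition if and only if $G^*$ has an m-convex $p$-cover'' follows. For the forward direction, given a clique $p$-partition $\{C_1,\ldots,C_p\}$ of $G$, I set $S_i=C_i\cup\{u_i\}$; since $u_i$ is adjacent to every vertex of $V(G)$, each $S_i$ is a clique of $G^*$ and hence trivially m-convex, the $S_i$'s cover $V(G^*)$, and each is a proper subset because $u_j\notin S_i$ for $j\neq i$. Conversely, given an m-convex $p$-cover $\{S_1,\ldots,S_p\}$ of $G^*$, the lemma makes each $S_i\cap V(G)$ a clique of $G$, yielding a clique cover of $V(G)$ by $p$ (possibly empty) cliques; this is refined to a $p$-partition by taking successive differences and, using $|V(G)|\geq p$, splitting oversized cliques into singletons until exactly $p$ non-empty parts remain. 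I expect the structural lemma to be the main obstacle, while the only additional subtlety is the cover-to-partition conversion, which the hypothesis $|V(G)|\geq p$ handles.
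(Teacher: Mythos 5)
Your proposal is correct and follows essentially the same route as the paper: a reduction from \textsc{Clique $p$-Partition} in which new pairwise nonadjacent vertices adjacent to all of $V(G)$ are added, together with the structural observation that proper m-convex sets of the augmented graph meet $V(G)$ in a clique. The only difference is cosmetic: you attach $p$ apex vertices instead of the paper's two, which gives you a clean exact equivalence (clique $p$-partition iff m-convex $p$-cover) rather than the paper's range condition $p-2\le\ell\le p$, and your explicit cover-to-partition conversion under $|V(G)|\ge p$ is sound.
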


\begin{proof}
Deciding if a set $S\subseteq V(G)$ is m-convex can be done in polynomial time~\cite{paper5} and thus the problem of deciding if a graph $G$ has an m-convex $p$-cover belongs to $NP$.
Let $G$ be an instance of the \textsc{Clique $p$-Partition} problem. We assume, without loss of generality, that $\vert V(G)\vert\ge 2$ and $G$ is not a complete graph. We construct a graph $G'$ whose vertex set is $V(G)\cup\{u,v\}$ and $E(G')=E(G)\cup \{ux,vx\colon\,x\in V(G)\}$. The proof follows in the same way as the proof of the NP-completeness of Theorem~1 in~\cite[Theorem~1]{paper3}. It is not hard to prove that any proper m-convex set of $G'$ is a clique. From this assertion, it follows that $G'$ has an m-convex $p$-partition if and only if $G$ has an $\ell$-clique partition for some integer $\ell$ such that $p-2\le\ell\le p$.
\end{proof}

Next, we will prove that deciding whether a graph $G$ has an m-convex $p$-cover becomes polynomial-time solvable for $p=2$. A \emph{clique separator} of a connected graph $G$ is a clique $\mathcal{C}$ such that $G-\mathcal{C}$ is disconnected.

\begin{lem}[{\cite{paper5}}]\label{lem:m-convex 2-cover}
Let $G$ be a connected graph, $\mathcal{C}$ a clique separator of $G$, and $S$ the union of the vertex sets of some of the connected components of $G-\mathcal{C}$. Then $S\cup \mathcal{C}$ is an m-convex set of $G$.
\end{lem}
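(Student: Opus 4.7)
The plan is to show directly that $S \cup \mathcal{C}$ satisfies the defining property of an m-convex set, namely that every induced path whose end-vertices lie in $S \cup \mathcal{C}$ is wholly contained in $S \cup \mathcal{C}$. Let $T = V(G) \setminus (S \cup \mathcal{C})$, i.e., the union of the vertex sets of the connected components of $G - \mathcal{C}$ that were \emph{not} chosen to form $S$. The key structural fact I will exploit is that no edge of $G$ joins a vertex of $S$ to a vertex of $T$, since $\mathcal{C}$ separates them; equivalently, every path from $S$ to $T$ in $G$ must pass through $\mathcal{C}$.

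Take arbitrary $u, v \in S \cup \mathcal{C}$ and let $P\colon u = x_0, x_1, \ldots, x_k = v$ be any induced path in $G$ from $u$ to $v$. I will argue by contradiction, assuming some vertex of $P$ lies in $T$. Let $i$ be any index with $x_i \in T$, let $j$ be the largest index less than $i$ with $x_j \notin T$, and let $\ell$ be the smallest index greater than $i$ with $x_\ell \notin T$; these exist since $u, v \notin T$. Then $x_{j+1}, \ldots, x_{\ell-1}$ all lie in $T$, and in particular $x_j$ has a neighbor in $T$ while $x_j \notin T$. Since no edge crosses between $S$ and $T$, this forces $x_j \in \mathcal{C}$. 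By the symmetric argument, $x_\ell \in \mathcal{C}$.

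Now both $x_j$ and $x_\ell$ belong to the clique $\mathcal{C}$, so they are adjacent in $G$. However, $\ell - j \geq 2$ (because at least the vertex $x_i \in T$ sits strictly between them along $P$), so $x_j$ and $x_\ell$ are nonconsecutive vertices of $P$. The edge $x_j x_\ell$ is thus a chord of $P$, contradicting the fact that $P$ is induced. Hence no vertex of $P$ lies in $T$, which means $V(P) \subseteq S \cup \mathcal{C}$, proving that $S \cup \mathcal{C}$ is m-convex.

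There is no real obstacle here; the entire argument rests on a single observation, namely that two indices along an induced path cannot both land in $\mathcal{C}$ unless they are consecutive. The only thing to watch is the edge case $\mathcal{C} = \emptyset$, but then $G$ being connected forces $G - \mathcal{C}$ to have a single component, so either $S = \emptyset$ (and $S \cup \mathcal{C} = \emptyset$ is trivially m-convex) or $S = V(G)$ (and $S \cup \mathcal{C} = V(G)$ is trivially m-convex).
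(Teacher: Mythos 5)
Your argument is correct. Note that the paper does not prove this lemma at all: it is imported verbatim from the cited reference on monophonic convexity, so there is no in-paper proof to compare against. Your direct verification is sound and self-contained: the separation property of $\mathcal{C}$ forces the first and last vertices $x_j, x_\ell$ bounding any maximal stretch of the path inside $T = V(G)\setminus(S\cup\mathcal{C})$ to lie in $\mathcal{C}$, and since $\mathcal{C}$ is a clique and $\ell - j \ge 2$, the edge $x_jx_\ell$ is a chord of the induced path, a contradiction. One small remark: the edge case $\mathcal{C}=\emptyset$ is in fact vacuous, since the empty set cannot be a clique separator of a connected graph ($G-\emptyset = G$ is connected), so your fallback discussion there, while harmless, is unnecessary.
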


\begin{corollary}\label{cor:m-convex 2-conver clique separator}
If $G$ is a connected graph having a clique separator, then $G$ has an m-convex 2-cover.
\end{corollary}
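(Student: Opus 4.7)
The plan is to invoke Lemma~\ref{lem:m-convex 2-cover} twice, applied to complementary choices of connected components of $G-\mathcal{C}$. Let $\mathcal{C}$ be a clique separator of $G$. By definition, $G-\mathcal{C}$ is disconnected, so it has connected components $H_1,H_2,\ldots,H_k$ with $k\ge 2$.

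First I would let $A$ be the vertex set of $H_1$ and let $B=V(H_2)\cup\cdots\cup V(H_k)$, so that $A$ and $B$ are both nonempty and $A\cup B=V(G)\setminus \mathcal{C}$. Applying Lemma~\ref{lem:m-convex 2-cover} with $S=A$ gives that $V_1:=A\cup \mathcal{C}$ is an m-convex set of $G$. Applying the same lemma with $S=B$ (taking the union of the remaining components) gives that $V_2:=B\cup \mathcal{C}$ is an m-convex set of $G$.

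It remains to check that $\{V_1,V_2\}$ is indeed an m-convex $2$-cover, i.e., that neither $V_1$ nor $V_2$ equals $V(G)$ and that $V_1\cup V_2=V(G)$. Since $B\neq\emptyset$ and $B\cap V_1=\emptyset$, we have $V_1\neq V(G)$; symmetrically, $A\neq\emptyset$ and $A\cap V_2=\emptyset$, so $V_2\neq V(G)$. Finally, $V_1\cup V_2=A\cup B\cup \mathcal{C}=V(G)$. There is no real obstacle here, as the work is entirely done by Lemma~\ref{lem:m-convex 2-cover}; the only thing to be careful with is the requirement (in the definition of a convex $p$-cover at the beginning of the paper) that each set in the cover be strictly smaller than $V(G)$, which is ensured by splitting the components of $G-\mathcal{C}$ into two nonempty groups.
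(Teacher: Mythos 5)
Your proof is correct and follows essentially the same route as the paper: the paper also takes one component $G_1$ of $G-\mathcal{C}$ and covers $G$ by $V(G_1)\cup\mathcal{C}$ and $(V(G)\setminus V(G_1))\cup\mathcal{C}$, both m-convex by Lemma~\ref{lem:m-convex 2-cover}. Your additional check that both sets are proper subsets of $V(G)$ is a worthwhile detail the paper leaves implicit.
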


\begin{proof}
If $\mathcal{C}$ is a clique separator of $G$ and $G_1$ is any connected component of $G-\mathcal{C}$, then, by virtue of Lemma~\ref{lem:m-convex 2-cover}, $\{V(G_1)\cup \mathcal{C},(V(G)\setminus V(G_1))\cup \mathcal{C}\}$ is an m-convex 2-cover of $G$.
\end{proof}

Let $G$ be a graph. The \emph{m-convex hull} of a set $S\subseteq V(G)$ is the inclusion-wise minimal m-convex set $M$ of $G$ containing $S$ or, equivalently, $M=J_{\mathcal P}[S]$ where $\mathcal P$ is the set of all induced paths of $G$. A set $X$ of vertices of $G$ is an \emph{m-hull set} of $G$ if the m-convex hull of $X$ is $V(G)$.

\begin{lem}[{\cite{paper5}}]\label{lem:convex-mitre}
If $G$ is a connected graph having no clique separator that is not a complete graph, then every pair of nonadjacent vertices is an m-hull set of $G$.
\end{lem}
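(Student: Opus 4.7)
The plan is to prove the lemma by contradiction. Let $u,v$ be nonadjacent vertices of $G$ and let $H$ denote the monophonic convex hull of $\{u,v\}$; assume, for contradiction, that $H \neq V(G)$. The strategy is to extract, from the interface between $H$ and its complement, a clique of $G$ whose removal disconnects $G$. This reduces the hull statement to the structural hypothesis that $G$ has no clique separator.

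The setup I would use is the following. Let $W = V(G) \setminus H$, which is nonempty by assumption, and let $C$ be any connected component of the induced subgraph $G[W]$. Define $K = N_G(C) \cap H$, the set of vertices of $H$ having at least one neighbor in $C$. Connectedness of $G$ together with $C,H \neq \emptyset$ forces $K \neq \emptyset$. The pivotal claim is that $K$ is a clique. Suppose, toward a contradiction inside the contradiction, that $a,b \in K$ are nonadjacent, and pick neighbors $a' , b' \in C$ of $a$ and $b$, respectively. Since $C$ is connected, $G[\{a,b\} \cup C]$ is connected and contains an induced $a$-$b$ path $P$. Any chord of $P$ in $G$ would have both endpoints in $\{a,b\} \cup C$ and hence would already be a chord in $G[\{a,b\} \cup C]$, so $P$ is induced in $G$ as well. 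Because $a,b$ are nonadjacent, $P$ has length at least two, so it possesses at least one internal vertex, which lies in $C \subseteq V(G) \setminus H$. However, $a,b \in H$ and $H$ is m-convex, so every vertex of $P$ must lie in $H$, a contradiction. Therefore $K$ is a clique.

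To finish, I observe that $K$ is actually a separator of $G$: any neighbor of a vertex of $C$ lies in $C \cup K$, so after deleting $K$ the component $C$ is disjoint from $H \setminus K$, and $H \setminus K$ is nonempty because $u,v \in H$ are nonadjacent and hence not both contained in the clique $K$. Thus $K$ is a clique separator of $G$, contradicting the hypothesis. Consequently $H = V(G)$, which is exactly the statement that $\{u,v\}$ is an m-hull set.

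I expect the main technical obstacle to be the clique claim for $K$, specifically verifying that the induced $a$-$b$ path extracted from $G[\{a,b\} \cup C]$ remains induced in the ambient graph $G$ and genuinely witnesses a violation of the m-convexity of $H$; once this is in hand, the separator and nonemptiness checks are routine.
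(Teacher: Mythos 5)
Your proof is correct. Note that the paper does not prove this lemma at all---it is quoted from the cited reference---and your argument (assume the hull $H$ of $\{u,v\}$ is proper, take a component $C$ of $G[W]$ with $W=V(G)\setminus H$, show $N_G(C)\cap H$ is a clique via an induced $a$--$b$ path through $C$ contradicting m-convexity, and conclude it is a clique separator since $u,v$ cannot both lie in a clique) is exactly the standard proof of this classical result, with all the key checks (the path being induced in $G$ because $G[\{a,b\}\cup C]$ is an induced subgraph, and $H\setminus K\neq\emptyset$) carried out correctly.
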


The following result is an immediate consequence of Lemma~\ref{lem:convex-mitre}.

\begin{corollary}\label{cor: proper m-convex}
If $G$ is a connected graph having no clique separator, then every proper m-convex of $G$ is a clique. Moreover, $G$ has an m-convex $2$-cover if and only if $G$ is a co-bipartite graph.
\end{corollary}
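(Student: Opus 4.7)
The plan is to derive both assertions directly from Lemma~\ref{lem:convex-mitre}.

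For the first claim, I would split on whether $G$ is a complete graph. If it is, then every subset of $V(G)$ is trivially a clique and the conclusion is immediate. Otherwise, let $S$ be a proper m-convex set of $G$ and suppose, for contradiction, that $S$ contains two nonadjacent vertices $u$ and $v$. Lemma~\ref{lem:convex-mitre} applies (since $G$ is connected, has no clique separator, and is not complete) and gives that the m-convex hull of $\{u,v\}$ is $V(G)$. But $S$ is an m-convex set containing $\{u,v\}$, hence it contains this hull, forcing $S=V(G)$ and contradicting that $S$ is proper. Therefore $S$ cannot contain two nonadjacent vertices, so $S$ is a clique.

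For the second claim, the forward direction is a one-line consequence of the first: if $\{V_1,V_2\}$ is an m-convex $2$-cover of $G$ then, by definition, $V_1\neq V(G)$ and $V_2\neq V(G)$; by the first part both are cliques, so $V(G)$ is covered by two cliques and hence $\overline{G}$ is bipartite, i.e., $G$ is co-bipartite. For the converse I would use the standard observation that every clique is m-convex (the only induced path between two adjacent vertices is the edge itself). Hence if $G$ is co-bipartite and $V(G)$ is partitioned into two cliques $K_1$ and $K_2$, then $\{K_1,K_2\}$ is an m-convex $2$-cover as soon as $K_1,K_2\neq V(G)$.

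The only mild obstacle is the degenerate case where $G$ is itself a complete graph, since then one of the cliques in the natural co-bipartite partition might be empty and the other might equal $V(G)$. To handle this I would treat the complete case separately: for $G=K_n$ with $n\ge 2$, pick any vertex $v$ and set $V_1=\{v\}$ and $V_2=V(G)\setminus\{v\}$; both are proper cliques and hence proper m-convex sets, yielding an explicit m-convex $2$-cover. This completes the converse and the whole corollary.
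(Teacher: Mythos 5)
Your proposal is correct and takes essentially the same route as the paper, which states the corollary as an immediate consequence of Lemma~\ref{lem:convex-mitre}: the hull of any nonadjacent pair inside a proper m-convex set would force that set to be all of $V(G)$, and the cover characterization then follows since cliques are m-convex. Your explicit handling of the complete-graph case is a reasonable extra precaution that the paper leaves implicit.
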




\begin{theorem}\label{thm:m-convex-cover}
It is polynomial-time solvable to decide, given a graph $G$, whether $G$ has an m-convex $2$-cover. 
\end{theorem}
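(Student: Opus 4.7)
The plan is to turn the structural results of this section into a polynomial-time algorithm. Given a graph $G$, the decision splits into three cases according to its connectivity and the existence of a clique separator.

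If $G$ is disconnected, I would pick any connected component $C$ and observe that both $V(C)$ and $V(G)\setminus V(C)$ are m-convex: any induced path joining two vertices of a connected component stays inside that component, so all of its vertices lie in $V(C)$ (resp.\ in $V(G)\setminus V(C)$). Both sets are proper and their union is $V(G)$, so they form an m-convex $2$-cover. Testing connectivity and exhibiting such a $C$ takes linear time via BFS.

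If $G$ is connected and admits a clique separator, Corollary~\ref{cor:m-convex 2-conver clique separator} directly provides an m-convex $2$-cover. Detecting the existence of a clique separator, or concluding that none exists, can be done in polynomial time: one can invoke Tarjan's $O(nm)$ clique-separator decomposition algorithm, or any subsequent method for the same task. Finally, if $G$ is connected and has no clique separator, Corollary~\ref{cor: proper m-convex} reduces the question to whether $G$ is co-bipartite. This in turn amounts to checking that $\overline{G}$ is bipartite, which is a standard BFS-based test of polynomial complexity.

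Combining the three cases, the answer is decided in polynomial time. The only non-elementary ingredient is the efficient recognition of graphs admitting a clique separator, which is the main hurdle; it is resolved by invoking the classical decomposition result rather than reproving it here. The remaining steps, namely the connectivity and co-bipartiteness checks, follow immediately from the lemmas and corollaries already established.
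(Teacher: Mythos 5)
Your proposal is correct and follows essentially the same route as the paper: run a clique-separator search (Tarjan, $O(nm)$), use Corollary~\ref{cor:m-convex 2-conver clique separator} when a separator is found, and otherwise apply Corollary~\ref{cor: proper m-convex} and test co-bipartiteness of $G$. Your explicit treatment of the disconnected case is a small, correct addition that the paper leaves implicit, but it does not change the argument.
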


\begin{proof}
In order to decide whether a connected graph $G$ has an m-convex $2$-cover, we first look for a clique separator in $O(nm)$ time~\cite{clique}. If a clique separator is found, then, by Corollary~\ref{cor:m-convex 2-conver clique separator}, $G$ has an m-convex 2-cover. If $G$ has no clique separator, we test if $G$ is co-bipartite, which can be performed in linear time. Consequently, it can be decided in $O(nm)$ time whether or not a given graph $G$ has an m-convex 2-cover.\end{proof}



\section*\refname
\bibliographystyle{elsarticle-num} 
\bibliography{convex}

\begin{thebibliography}{10}
\expandafter\ifx\csname url\endcsname\relax
  \def\url#1{\texttt{#1}}\fi
\expandafter\ifx\csname urlprefix\endcsname\relax\def\urlprefix{URL }\fi
\expandafter\ifx\csname href\endcsname\relax
  \def\href#1#2{#2} \def\path#1{#1}\fi

\bibitem{Douchet1988}
P.~Duchet, Convex sets in graphs. {II}. {M}inimal path convexity, J. Combin.
  Theory Ser. B 44 (1988) 307--316 (1988).

\bibitem{West01}
D.~B. West, Introduction to Graph Theory, Prentice Hall, 2001 (2001).

\bibitem{PR-digitalconvexity-1966}
A.~Rosenfeld, J.~L. Pfaltz, Sequential operations in digital picture
  processing, J. ACM 13~(4) (1966) 471--494 (1966).

\bibitem{paper2}
D.~Artigas, S.~Dantas, M.~C. Dourado, J.~L. Szwarcfiter, Partitioning a graph
  into convex sets, Discrete Math. 311 (2011) 1968--1977 (2011).

\bibitem{paper1}
C.~C. Centeno, S.~Dantas, M.~C. Dourado, D.~Rautenbach, J.~L. Szwarcfiter,
  Convex partitions of graphs induced by paths of order three, Discrete Math.
  Theor. Comput. Sci. 12 (2010) 175--184 (2010).

\bibitem{paper3}
D.~Artigas, S.~Dantas, M.~C. Dourado, J.~L. Szwarcfiter, Convex covers of
  graphs, Mat. Contemp. 39 (2010) 31--38 (2010).

\bibitem{paper4}
R.~Buzatu, S.~Cataranciuc, Convex graph covers, Comput. Sci. J. Moldova 23
  (2015) 251--269 (2015).

\bibitem{td}
R.~Laskar, J.~Pfaff, S.~M. Hedetniemi, S.~T. Hedetniemi, On the algorithmic
  complexity of total domination, SIAM J. Algebraic Discrete Methods 5 (1984)
  420--425 (1984).

\bibitem{LR-2003}
V.~B. Le, B.~Randerath, On stable cutsets in line graphs, Theoret. Comput. Sci.
  301 (2003) 463--475 (2003).

\bibitem{paper5}
M.~C. Dourado, F.~Protti, J.~L. Szwarcfiter, Complexity results related to
  monophonic convexity, Discrete Appl. Math. 158 (2010) 1268--1274 (2010).

\bibitem{clique}
S.~H. Whitesides, An algorithm for finding clique cut-sets, Inform. Process.
  Lett. 12 (1981) 31--32 (1981).

\end{thebibliography}





\end{document}